\newtheorem{thrm}{Theorem}[section]
\newtheorem{lem}[thrm]{Lemma}
\newtheorem{prop}[thrm]{Proposition}
\theoremstyle{definition}
\newtheorem{remark}[thrm]{Remark}
\numberwithin{equation}{section}
\author{S. Ruhallah Ahmadi and Bruce Gilligan}
\address{
Department of Mathematics and Statistics, University of Regina,
Regina, Canada S4S 0A2}
\email{seyed.ruhala.ahmadi@gmail.com and gilliganbc@gmail.com}
\thanks{This work was partially supported by an NSERC Discovery Grant for which the authors are grateful.  
We also thank Prof. A. T. Huckleberry and the referee for their comments that led to improvements in the work.}
\keywords{homogeneous complex manifold, non-compact dimension two, complexification}
\subjclass{Primary 32M10}
\begin{document}

\title[Complexifying Lie Group Actions on $G/H$ for $d_{G/H}=2$]{Complexifying Lie group actions on homogeneous  \\  
manifolds of non--compact dimension two}

\begin{abstract}    
If $X$ is a connected complex manifold with $d_X = 2$ that admits a (connected) Lie group $G$ 
acting transitively as a group of holomorphic transformations, then the action extends to an action of the 
complexification $\widehat{G}$ of $G$ on $X$ except when 
either the unit disk in the complex plane 
or a strictly pseudoconcave homogeneous complex manifold is 
the base or fiber of some homogeneous fibration of $X$.       
\end{abstract}  


\maketitle

\section{Introduction}   \label{sect1}

A useful invariant for non--compact manifolds 
in the setting of proper actions of Lie groups    
 is  the   notion of   
{\bf  non--compact dimension}   that  was introduced by Abels 
in \cite{Abe76}; see also \S 2 in \cite{Abe82}.      
We take the dual, but equivalent, approach and do not assume that any Lie group action is necessarily proper.     
For $X$ a connected (real) smooth manifold we define $d_X$ to be the codimension of the 
top non--vanishing homology group of $X$ with coefficients in $\mathbb Z_2$.  
The invariant $d_X$ measures, in a certain sense, how far the manifold $X$ is from being compact  
and this invariant is applied here to study a particular case of the following problem.     

\medskip  
Suppose a Lie group $G$ is acting as a group of holomorphic automorphisms on a complex space $X$.   
One can ask whether the complexification $\widehat{G}$ of $G$ (in the sense of Chapter XVII.5 in \cite{Hoch65}) 
acts holomorphically on a complex space $\widehat{X}$ into which $X$ can be $G$--equivariantly embedded   
as an open subset.   
A special case of this problem is to attempt to choose $\widehat{X} = X$.     
Certainly, the automorphism group of a compact complex manifold is a complex Lie group \cite{BoMo47}  
and in the compact case the action always extends to an action of the complexification of the group involved 
on the compact complex space.          
But such an extension does not exist in all settings, e.g., if $X$ is hyperbolic.   
However, it was proved in Theorem 3.1 in \cite{GH98}     that if $X$ is a 
complex manifold with $d_X$ equal to one that is homogeneous under the action  
of a Lie group $G$ acting by holomorphic transformations, then there is a corresponding 
transitive $\widehat{G}$--action on $X$.            

\medskip  
In this paper we consider complex manifolds that have a transitive action of a real Lie group 
acting as a group of holomorphic transformations and  
that satisfy the condition that $d$ is equal to two.   
We wish to resolve the problem stated above in this setting,   
i.e., determine when one can extend the given action of a  $G$ on 
such a complex manifold $X$ to the holomorphic action of its complexification $\widehat{G}$ on $X$.  
Of course, this is not always possible in the setting of $d=2$, 
as obvious examples are the unit disk in the complex plane, or more generally,  
the complement of a closed ball in higher dimensional projective space which fibers real analytically 
as a disk bundle over a compact manifold.        

\medskip  
It turns out that there is only one other type of example where the extension problem 
has a negative answer arising in the following way.  
Let $B = K/L$ be a compact rank one symmetric space and consider its tangent bundle $T(B)$ together  
with its natural Stein manifold structure and induced $K$--action, see Proposition 2 in \cite{MN63}.      
Let $Y$ be a $K$--equivariant compactification of $T(B)$ to a flag manifold obtained by  
adding an ample divisor.     
Now set $X := Y \setminus B$.  
Heuristically one can view this setting in the following way.   
Since $T(B)$ is Stein, it admits a strictly pseudoconvex exhaustion, which when observed from the other side 
endows $X$ with the structure of a strictly pseudoconcave homogeneous manifold.     
Such manifolds were classified in \cite{HS81}.        
Of course,  the extension problem also fails for homogeneous manifolds where such 
examples occur as fiber or base in any equivariant fibration of the manifolds.     
   
\medskip   
The purpose of this note is to prove the following.    

\begin{thrm}  
Suppose $G$ is a connected Lie group acting transitively and almost effectively  
as a group of holomorphic transformations on a complex manifold 
$X = G/H$ with $d_X = 2$.  
Then the complexification $\widehat{G}$ of $G$ acts holomorphically and transitively on $X$ unless 
the unit disk in the complex plane or a strictly pseudoconcave homogeneous manifold is either 
the fiber or base of some homogeneous fibration of $X$.      
\end{thrm}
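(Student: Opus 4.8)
The plan is to bootstrap from the already understood cases $d=0$ (compact manifolds, whose holomorphic automorphism groups are complex Lie groups by \cite{BoMo47}) and $d=1$ (Theorem 3.1 of \cite{GH98}) by cutting $X=G/H$ along its canonical $G$--equivariant fibrations. The quantitative tool I would set up first is the additivity of the non--compact dimension under a homogeneous fibration $F\hookrightarrow X\to B$, namely $d_X=d_F+d_B$. This is where the choice of $\mathbb{Z}_2$--coefficients in the definition pays off: for such a fibration the monodromy on $\mathbb{Z}_2$--homology is trivial, so in the Leray--Serre spectral sequence the corner term $H_q(B;H_p(F))$ (with $p,q$ the top degrees for $F$ and $B$) is untwisted and survives to $E_\infty$, while every term of strictly larger total degree vanishes for support reasons. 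Hence $H_{p+q}(X)\neq 0$ is the top homology of $X$ and $d_X=(\dim F-p)+(\dim B-q)=d_F+d_B$. Granting this, $d_X=2$ forces every fiber and base of a proper homogeneous fibration to have non--compact dimension in $\{0,1,2\}$, and a splitting into two pieces of dimension $1$ is the favorable case.

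Given additivity, I would run $X$ through its two canonical fibrations. The holomorphic reduction $G/H\to G/J$ has a holomorphically separable base, on which $\widehat G$ acts naturally, and a fiber $J/H$ carrying only constant holomorphic functions; the normalizer fibration $G/H\to G/N$ with $N=N_G(H^{\circ})$ has a fiber whose identity component is a complex Lie group modulo a discrete subgroup, again a space on which the complexification acts. Writing the Levi--Mal'cev decomposition $G=S\ltimes R$, I would isolate the solvable and semisimple contributions: for $G$ solvable the fibers produced are Cousin--type groups and $\mathbb{C}$-- and $\mathbb{C}^*$--bundles, on each of which the complexification acts directly or by \cite{GH98}, and none of which is a disk or a pseudoconcave manifold. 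The objective of the case analysis is to show that whenever $X$ admits a splitting into pieces of non--compact dimension $\le 1$ (or into $\widehat G$--homogeneous pieces such as $\mathbb{C}$), the piecewise complexified actions assemble into a holomorphic, transitive $\widehat G$--action on the total space.

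The real content is the primitive case, in which no proper homogeneous fibration splits off a piece of positive non--compact dimension; additivity then forces the essential part of $X$ to be a $d=2$ open orbit governed by the semisimple factor $S$, i.e.\ an open $S$--orbit in a flag manifold $\widehat S/P$. The heart of the argument is to classify these orbits. Since $\widehat G$ preserves $X$ exactly when the open orbit exhausts the relevant affine or rational piece, I would argue via pseudoconvexity and the geometry of the base cycle that a non--extendable primitive orbit of non--compact dimension two must be either strictly pseudoconvex with a point as maximal compact subvariety --- yielding the unit disk $\mathrm{SU}(1,1)/\mathrm U(1)$, whose complexification $\mathrm{SL}_2(\mathbb{C})$ moves it off itself inside $\mathbb{P}^1$ --- or strictly pseudoconcave, landing in the classification of \cite{HS81} and the tangent--bundle/ample--divisor models described in the introduction. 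In every other primitive case ($X$ compact, $X=\mathbb{C}$ or an affine--cone type, or $X$ already $\widehat G$--homogeneous) the complexification acts. Propagating these two exceptional atoms through fibrations then gives the statement as phrased.

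I expect the main obstacle to be precisely this primitive semisimple case: pinning down that the only $d=2$ open orbits for which $\widehat G$ fails to act are the disk and the strictly pseudoconcave manifolds of \cite{HS81}, and, in the reducible case, the gluing problem --- verifying that the complexified actions on fiber and base of a homogeneous fibration are compatible and patch to a genuine global $\widehat G$--action rather than merely existing piecewise. Controlling the interaction between the radical $R$ and the semisimple factor $S$ when both contribute to the two units of non--compact dimension is the most delicate bookkeeping in the proof.
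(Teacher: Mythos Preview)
Your global strategy --- split $d_X=2$ along homogeneous fibrations, handle pieces with $d\le 1$ by \cite{BoMo47} and \cite{GH98}, and isolate a ``primitive'' semisimple core --- matches the paper's. But two of the load-bearing steps are missing, and one of them is a genuine mathematical gap rather than an omitted detail.

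\medskip
\textbf{The gluing problem.} You flag it yourself but propose no mechanism. The paper does not use the holomorphic reduction or the normalizer fibration; it uses the $\mathfrak g$--anticanonical fibration $G/H\to G/J$. The reason is structural: this fibration realizes $G/J$ as an open $G$--orbit inside a $\widehat G$--orbit in some $\mathbb P_N$, and more to the point $G/H^{\circ}\to G/J$ is a holomorphic \emph{principal} bundle whose structure group $J/H^{\circ}$ is already a \emph{complex} Lie group. That is exactly the hypothesis of Proposition~2.6 in \cite{GH98}, which says: once $\widehat G$ acts on the base, it acts on the total space. Your fibrations do not come with this principal--bundle--with--complex--structure--group package, so the piecewise complexified actions have no reason to patch. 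This is not bookkeeping; it is the mechanism that makes the induction work.

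\medskip
\textbf{The primitive case is not a flag domain.} You write that additivity ``forces the essential part of $X$ to be \ldots\ an open $S$--orbit in a flag manifold $\widehat S/P$'', and then plan to argue by pseudoconvexity/pseudoconcavity of that flag domain. But nothing in your reduction forces the ambient $\widehat G$--orbit $Y=\widehat G/\widehat J$ to be compact: $\widehat J$ need not be parabolic. The paper's Lemma~\ref{f.lemma} is precisely the missing ingredient --- it shows $d_Y\le d_X=2$ for an open orbit of a real form in a projective $\widehat G$--orbit --- and its proof is nontrivial (Mostow fibration, $K^{\mathbb C}$--orbit structure, Onishchik's classification of pairs of transitive groups on a flag manifold, induction over simple factors). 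Only after $d_Y\in\{0,1,2\}$ is established does the case analysis run: $d_Y=0$ is the flag--domain case you had in mind (handled via Matsuki duality in Proposition~\ref{rk1ss}), while $d_Y=1$ and $d_Y=2$ are handled by Akhiezer's structure theorems \cite{Akh77}, \cite{Akh83}. In particular the unit disk arises not from your ``strictly pseudoconvex with a point as base cycle'' picture but from the $d_Y=2$ case, as the orbit of the real affine group inside $\widehat F=\mathbb C$ in Akhiezer's list (and, separately, from the radical step via \cite{Gil95}). Without Lemma~\ref{f.lemma} and the Akhiezer input your primitive case has no floor to stand on.
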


\medskip   
The methods that we use are now classical, e.g., see \cite{HO84}, \cite{OR84},  \cite{GH98}, and \cite{Gil95}.  
In particular, we use some basic fibrations from \emph{loc. cit.} along with a Fibration Lemma from \cite{AG94}  
for dealing with the invariant $d$ in the setting of fibrations.               
And the paper is organized as follows.   
In section two we discuss the exceptional flag domains that can occur.  
In section three we present a technical Lemma needed later.  
Finally, the proof of the Theorem is given in section four.

\section{The Exceptional Flag Domains}   \label{sect2}

Suppose $\widehat{G}$ is a connected complex semisimple Lie group, $\widehat{P}$ is a parabolic 
subgroup of $\widehat{G}$ and let  $Y := \widehat{G}/\widehat{P}$ be the corresponding flag manifold.  
Any real form $G$ of $\widehat{G}$ has a finite number of orbits in $Y$ and so at least one orbit 
must be open.   
Such an open orbit is called a flag domain.  
For detailed information about this setting we refer the interested reader to \cite{Wol69} and \cite{FHW}.     
We only recall here that the group ${\rm Aut}(Y)$ can be faithfully represented   
into  ${\rm Aut}(\mathbb P_N)$ for some equivariant embedding of $Y$ into a projective space $\mathbb P_N$   
and so we are dealing with linear groups and are in an algebraic setting throughout.   

\bigskip       
Let $K$ be a maximal compact subgroup of $G$ and $K^{\mathbb C}$ be its complexification in ${\rm Aut}(Y)$.     
Matsuki duality (see \cite{Mat79} and \cite{Mat82}; for a geometric approach see Theorem 8.3.1 in \cite{FHW})      
states that to every $G$--orbit in the flag manifold $Y$  
there is a unique $K^{\mathbb C}$--orbit which intersects the $G$--orbit in a $K$--orbit      
and vice versa.   
If the $G$--orbit is open (resp. closed), then its dual $K^{\mathbb C}$--orbit is closed (resp. open). 
Since there is a unique closed $G$--orbit $A$ in $Y$ which is also a $K$-orbit 
(Theorem 3.5 and Corollary 3.4, \cite{Wol69}),
it follows from Matsuki duality that there is a unique open $K^{\mathbb C}$--orbit $W$ in $Y$ which contains $A$. 
Since the $K^{\mathbb C}$--action is algebraic, the isotropy subgroup for this action is algebraic 
and thus has a finite number of connected components.  
As a consequence $W$ has either one or two ends 
(see \cite{Bor53} when the isotropy group is connected and Proposition 1 \cite{Gil91} 
when the isotropy group has a finite number of connected components).    

\bigskip  
We are interested in the special case of a flag domain $X$ with $d_X =2$. 
Our approach is to study the $K$--orbits in the manifold $X$.   
Now the unique complex $K$--orbit $E$ in the flag domain $X$   is the unique 
orbit of minimal dimension and is a strong deformation retract of $X$, e.g., see    \cite{Wol69}.   
We now use this fact.       

\begin{prop} \label{rk1ss}  
Let $X$ be a flag domain homogeneous under 
a real form $G$ of a complex semisimple Lie group $\widehat{G}$ 
acting as a group of holomorphic transformations and assume $d_X = 2$.  
Then $X$ is a non--compact strictly pseudoconcave homogeneous manifold.   
In particular, $\widehat{G}$ does not act transitively on $X$.   
\end{prop}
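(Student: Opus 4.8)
The plan is to extract the essential invariant from the complex $K$--orbit $E$ and then read off the convexity type of $X$ from the position of $E$ inside $X$. First I would use the fact recalled above that $E$ is the unique complex $K$--orbit in $X$ and is a strong deformation retract of $X$. Since $E$ is a compact (connected) complex manifold, $H_{2\dim_{\mathbb C}E}(E;\mathbb Z_2)\neq 0$ and this is its top non--vanishing homology; the retraction transports this isomorphism to $X$, so the top non--vanishing $\mathbb Z_2$--homology of $X$ sits in degree $2\dim_{\mathbb C}E$. As $X$ is a complex manifold of real dimension $2\dim_{\mathbb C}X$, this yields $d_X = 2\dim_{\mathbb C}X - 2\dim_{\mathbb C}E = 2\,\mathrm{codim}_{\mathbb C}E$. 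The hypothesis $d_X=2$ therefore forces $\mathrm{codim}_{\mathbb C}E = 1$, that is, $E$ is a compact complex hypersurface in $X$. Non--compactness of $X$ is then immediate, since $d_X = 2 \neq 0$ (equivalently, $X$ is a proper open subset of the connected compact flag manifold $Y$ and so is not closed).

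Next I would establish strict pseudoconcavity from the codimension--one base cycle. The key geometric input is that the normal bundle of the base cycle $E$ in a flag domain is positive; with $\mathrm{codim}_{\mathbb C}E = 1$ this makes $E$ play the role of an ample divisor forming the compact core of $X$. Concretely I would produce a $K$--invariant smooth function $\rho$ on $X$ that attains its maximum exactly along $E$ and whose sublevel sets $\{\rho \le c\}$ exhaust $X$, and then check that near $E$ the Levi form of $-\rho$ is positive in the one normal direction, so that the sets $\{\rho \le c\}$ are strictly pseudoconvex and $X$ is strictly pseudoconcave in the sense of Andreotti. The local model to keep in mind is $\mathbb P^n\setminus \overline{B^n}$, the complement of a closed ball, which deformation retracts onto the hyperplane at infinity and is the standard strictly pseudoconcave homogeneous example; our $X$ is the homogeneous analogue built around $E$. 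Once strict pseudoconcavity is in hand, $X$ is one of the manifolds classified in \cite{HS81}.

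Finally, the non--transitivity of $\widehat G$ follows from the concavity. If $\widehat G$ acted transitively and holomorphically on $X$, then $X$ would be homogeneous under a complex Lie group and could carry no proper $\widehat G$--invariant compact complex subvariety; but a strictly pseudoconcave manifold retains its maximal compact analytic set, namely the core $E$, and this is exactly such a subvariety, a contradiction. Alternatively one argues directly that $X \subsetneq Y = \widehat G/\widehat P$ is a proper open subset of a compact manifold on which $\widehat G$ already acts transitively, so no transitive $\widehat G$--action on $X$ is possible.

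The main obstacle is the strict--pseudoconcavity step, and specifically getting the sign of the Levi form right: positivity of the normal bundle of the codimension--one deformation retract $E$ must be converted into \emph{concavity} of $X$ rather than convexity. This is precisely where the complex dimension enters. When $\dim_{\mathbb C}X = 1$ the orbit $E$ is a point and $X$ is the unit disk, for which both open $G$--orbits are Stein and the notion of strict pseudoconcavity degenerates; this degenerate case is exactly the disk singled out in the statement of the Theorem, and I would dispose of it separately. In complex dimension at least two the normal bundle computation produces genuine strict pseudoconcavity and the argument goes through.
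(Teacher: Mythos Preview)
Your opening computation is clean and correct: since the base cycle $E$ is a compact strong deformation retract of $X$, one reads off $d_X = 2\,\mathrm{codim}_{\mathbb C}E$, so $d_X=2$ forces $E$ to be a complex hypersurface. The non--transitivity of $\widehat G$ via your alternative argument (that $X$ is a proper open $G$--orbit in $Y=\widehat G/\widehat P$, on which $\widehat G$ already acts transitively) is also fine.

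The genuine gap is exactly where you place it. You assert as ``key geometric input'' that the normal bundle of the base cycle $E$ in a flag domain is positive, and your entire construction of a $K$--invariant exhaustion with strictly pseudoconcave level sets rests on this. But you give no argument and cite no result; positivity of $N_{E/X}$ for a codimension--one base cycle is not a standard fact one can simply quote. Without it, the concavity claim is unsupported, and so is the appeal to \cite{HS81}, since that classification presupposes strict pseudoconcavity rather than producing it.

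The paper sidesteps this by running the logic in reverse. Instead of proving concavity and then applying \cite{HS81}, it first \emph{identifies} $X$ via the $K^{\mathbb C}$--orbit structure: the Matsuki--dual open $K^{\mathbb C}$--orbit $W$ has one or two ends, and a case analysis using \cite{Akh77}, Theorem~5.2 of \cite{GH09}, and \cite{Akh79} pins $X$ down as either the complement of a closed ball in $\mathbb P_n$ or $Y\setminus B$ with $Y$ a compactification of the tangent bundle of a compact rank--one symmetric space $B$. These specific spaces are already known to lie in the \cite{HS81} list. So the paper's route is ``classify first, then recognise as strictly pseudoconcave''; yours is ``prove strict pseudoconcavity directly, then classify''. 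The latter would be more conceptual if the normal--bundle step were filled in, but as written that step is an assertion, not an argument.
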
  

\begin{proof}    
As we noted above, $W$ has either one or two ends and 
we first suppose that $W = K^{\mathbb C}/L^{\mathbb C}$ has two ends, 
where $L^{\mathbb C}$ denotes the isotropy subgroup. 
Since we are in an algebraic setting, 
there exists a parabolic subgroup $\widehat{P}$ of $K^{\mathbb C}$ 
and a character $\chi: \widehat{P} \to \mathbb C^*$ such that $L^{\mathbb C} = {\rm ker}\ \chi$, see \cite{Akh77}.  
In particular, $W$ fibers as a $\mathbb C^*$--bundle over the flag manifold $Q := K^{\mathbb C}/\widehat{P}$.    
Since any $K$--orbit in $W$ is mapped surjectively onto the base $Q$, 
all $K$--orbits in $W$ have real codimension at most two.  
Now suppose there would be a $K$--orbit $B$ in $W$ that has real codimension two.  
We claim that this assumption yields a contradiction.    
Since $Q$ is simply connected, $B$ is a topological section of the $\mathbb C^*$--principal bundle 
$W$ and thus $W$ is topologically trivial.
But $H^1(Q, {\mathcal O}) = 0$, because $Q$ is a flag manifold.      
So the line bundle associated to $W$ by adding a 0--section is analytically trivial.
Thus $W$ as a bundle is also analytically trivial.
This means that all $K$--orbits in $W$ are complex hypersurfaces,  
contradicting the fact that in the flag domain $X$ there is a unique $K$--orbit which is complex, 
see Lemma 5.1 in \cite{Wol69}.    
As a result all $K$--orbits in $W$ have real codimension one, i.e., are real hypersurfaces.  

\bigskip  
We are now in the following setting.  
The group $K^{\mathbb C}$ has one open orbit in $Y$ and two that are closed.  
By Matsuki duality the group $G$ correspondingly has two open orbits and 
one closed orbit with the latter being exactly a real hypersurface $K$--orbit in $W$.     
This setting is described in Theorem 5.2 in \cite{GH09}, where it is proved  
that $Y \cong \mathbb P_n$ and   other than $W$, there are two closed $K^{\mathbb C}$--orbits that are 
a $\mathbb P_{q-1}$ and a $\mathbb P_{p-1}$ with $(q-1)+(p-1)=n-1$. 
Since $X = W \cup \mathbb P_{q-1}$ and $d_X = 2$, it follows that $\mathbb P_{q-1}$ 
is a complex hypersurface in $X$, i.e., $q-1 = n-1$.  
From the above equality we see that $p=1$ and so    
$\mathbb P_{p-1}$ is a point.  
The groups that can occur in this setting are also described in Theorem 5.2 in \cite{GH09}.      

\bigskip  
Now suppose $W$ has one end.   
By the discussion given before the statement of the Proposition, the flag manifold  
$Y$ consists of two $K^{\mathbb C}$--orbits, namely, $W$ and $E$, 
where $W$ is the open orbit and $E$ is a complex hypersurface orbit.     
Since we are in the setting of algebraic group actions on a flag manifold, this 
set up was analyzed in \cite{Akh79} using classification results of Wang \cite{Wan52}.    
The open $K^{\mathbb C}$--orbit is the tangent bundle $T(B)$ of a compact rank one 
symmetric space $B$.     
 \end{proof}   

\begin{remark}    
We refer the reader to  \cite{HS81} for a very detailed description  
of these non--compact strictly pseudoconcave homogeneous manifolds.     
It follows from the classification given there that $X$ is the complement of the closure of a ball in $\mathbb P_n$ which is   
the flag manifold $Y$ containing $X$ or $X = Y \setminus B$ with $Y$    
the compactification of the tangent bundle $T(B)$ of a compact rank one symmetric space $B$.           
The list of these spaces is as follows:    
 the $n$-sphere, $\mathbb S^n$, $n\geq 2$,  
real projective space, $\mathbb {RP}_n$, $n\geq 2$,   
complex projective space $\mathbb P_n$, $n\geq 1$,     
quaternionic projective space $\mathbb {QP}_n$, $n\geq 1$, and     
the Cayley projective plane $F_4 / Spin(9)$.
\end{remark}

\section{A technical Lemma}  \label{sect3}

We will need the following technical result in the proof of the theorem.      

\begin{lem}   \label{f.lemma}   
Let $Y = \widehat{G}/\widehat{H}$ be an orbit of a connected complex semisimple Lie 
group $\widehat{G}$ acting holomorphically on some projective space and assume that a real form $G$ 
of $\widehat{G}$ has an open orbit $X := G/H$ in $Y$ with $d_X = 2$.   
Then  $d_Y \le d_X$.  
\end{lem}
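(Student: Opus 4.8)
The plan is to reduce the statement to a comparison of top-dimensional $\mathbb{Z}_2$--homology and then to exploit the open inclusion $X \hookrightarrow Y$ by means of a long exact sequence. Since $X$ is open in $Y$, the two manifolds have the same real dimension, say $m := \dim_{\mathbb{R}} Y = \dim_{\mathbb{R}} X$, and by definition $d_X = m - q_X$ and $d_Y = m - q_Y$, where $q_M$ denotes the largest degree in which $H_\bullet(M;\mathbb{Z}_2)$ is non-zero. Thus the assertion $d_Y \le d_X$ is equivalent to $q_X \le q_Y$. Over $\mathbb{Z}_2$ Poincaré duality holds for an arbitrary (possibly non-orientable, possibly non-compact) manifold, giving $H_k(M;\mathbb{Z}_2) \cong H^{m-k}_c(M;\mathbb{Z}_2)$ and hence the convenient reformulation $d_M = \min\{\, j : H^j_c(M;\mathbb{Z}_2) \ne 0 \,\}$. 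In these terms I must show that the lowest degree in which $Y$ carries non-vanishing compactly supported cohomology is no larger than the corresponding degree for $X$.

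First I would set up the engine. Writing $Z := Y \setminus X$ for the closed complement of the open orbit, the extension-by-zero sequence for the open inclusion yields, with $\mathbb{Z}_2$ coefficients, the long exact sequence $\cdots \to H^j_c(X) \to H^j_c(Y) \to H^j_c(Z) \to H^{j+1}_c(X) \to \cdots$. For $j < d_X$ one has $H^j_c(X) = 0$, so this sequence already forces an injection $H^j_c(Y) \hookrightarrow H^j_c(Z)$, which is an isomorphism once $j < d_X - 1$. Consequently, if some boundary orbit contributes a class to $H^j_c(Z)$ in a degree $j \le d_X - 1$, then $H^j_c(Y) \ne 0$ and we are finished. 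Heuristically this reflects the fact that passing from the open orbit $X$ to the larger space $Y$ only adjoins limit orbits and so can only make the space \emph{more compact}, that is, can only lower the invariant $d$.

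The remaining, and genuinely delicate, point is to control $Z$, and this is the step I expect to be the main obstacle. The complement $Z$ is a $G$--invariant closed subset, and since we work in an algebraic setting the $G$--orbits stratify $Y$ into finitely many pieces, so $Z$ is a finite union of boundary $G$--orbits organised by the homogeneous fibrations of $X$. To conclude I would run the long exact sequence in tandem with the Fibration Lemma of \cite{AG94}, arguing by induction on the orbit stratification, so as to exclude the only bad possibility, namely that the connecting homomorphisms conspire to make $H^j_c(Y) = 0$ for every $j \le d_X$. Concretely, one must show that either some boundary stratum has non-compact dimension at most $d_X$ (thereby feeding a low-degree class into $H^j_c(Y)$), or else the top class in $H^{d_X}_c(X)$ survives to $H^{d_X}_c(Y)$. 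Verifying that this dichotomy is exhaustive, using the semisimplicity of $\widehat{G}$ together with the fibration structure of the orbit, is where the real work lies.
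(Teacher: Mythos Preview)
Your topological setup is correct: over $\mathbb{Z}_2$ one has $d_M = \min\{\,j : H^j_c(M;\mathbb{Z}_2)\neq 0\,\}$, and the long exact sequence for the pair $(Y,Z)$ with $Z = Y\setminus X$ behaves exactly as you describe. The difficulty is that your final paragraph is not a proof but a restatement of the problem. The ``dichotomy'' you hope to verify --- that either a boundary stratum contributes a class in degree at most $d_X-1$ that survives to $H^*_c(Y)$, or the class in $H^{d_X}_c(X)$ survives --- is precisely what fails in general, and your argument gives no mechanism by which semisimplicity or the specific hypothesis $d_X = 2$ would force it.

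The paper's own Remark makes this concrete. Take $\widehat{G} = SL_2(\mathbb{C})\ltimes\mathbb{C}^2$ acting on $Y=\mathbb{C}^2$, with real form $G = SL_2(\mathbb{R})\ltimes\mathbb{R}^2$ and open orbit $X = \mathbb{C}^2\setminus\mathbb{R}^2$. Here $d_X = 3$, $d_Y = 4$, $Z=\mathbb{R}^2$, and the long exact sequence reads $0 \to H^2_c(Y) \to H^2_c(Z) \to H^3_c(X) \to H^3_c(Y) \to \cdots$, i.e.\ $0 \to 0 \to \mathbb{Z}_2 \to \mathbb{Z}_2 \to 0$. The boundary stratum does carry a low-degree class, but the connecting map sends it isomorphically onto $H^{d_X}_c(X)$ and nothing reaches $H^*_c(Y)$ in degree $\le d_X$. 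So your dichotomy is not exhaustive without further input, and nothing in your outline explains what that input would be. Since the general inequality $d_Y\le d_X$ is, as the paper notes, open for arbitrary $d_X$, any argument that does not use $d_X=2$ in an essential way should be viewed with suspicion.

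The paper's proof is of an entirely different nature: it is structural rather than homological. One takes a maximal compact $K\subset G$, uses the Mostow fibration to see that the minimal $K$--orbit in $X$ has real codimension two, and then splits into cases according to whether the generic $K$--orbits have codimension one or two. The codimension-two case is handled via a fibration over a curve (or a classification from \cite{GH09}); the hypersurface case is the substantial one, where one passes to the open $K^{\mathbb{C}}$--orbit, compactifies, and invokes Onishchik's classification of pairs of simple groups transitive on the same flag manifold to force the Weisfeiler fibration of $Y$ to have one-dimensional fibre. The semisimple case is then reduced to the simple case by factoring. Every step uses $d_X=2$ concretely, and the semisimplicity enters through the classification results, not through any exact-sequence bookkeeping.
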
  

\begin{proof}  
If $Y$ is compact, then we are done because $d_Y=0$. 
So we assume throughout the rest of the proof that $Y$ is not compact.  
Let $K$ be a maximal compact subgroup of $G$.          
As we are in the setting of algebraic groups, there is a Mostow fibration, see \cite{Mos55}, 
of the homogeneous manifold $X$ as a real vector bundle over a minimal $K$--orbit $M$.    
By assumption $d_X = 2$ and thus $M$ has real codimension two.  
So the generic $K$--orbits in $X$ have real codimension one or two.  

\bigskip 
Suppose these orbits all have codimension two.  
If these orbits are complex, then they are also $K^{\mathbb C}$--orbits, i.e., $Y$ is fibered by flag manifolds.    
One then has a fibration   
\[ 
          Y \; = \; \widehat{G}/\widehat{H} \; \longrightarrow \; \widehat{G}/K^{\mathbb C} \cdot\widehat{H} \; =: \; B ,  
\]  
where $B$ is one dimensional and non--compact.  
If  $B$ is biholomorphic to $\mathbb C$, then $d_Y = d_{\mathbb C} = 2$.    
If  $B = \mathbb C^*$, then $X = Y$ and we are again done.     
In passing, note that $Y$ splits as a product.     
And, if the $K$--orbits are not complex, then by the classification result given in Theorem 5.6 in \cite{GH09}   
it follows that $Y$ is compact and we assumed above that this is not the case.         

\bigskip  
So we now assume that the generic $K$--orbits in $X$ are real hypersurfaces.  
In this setting the complexification $K^{\mathbb C}$ of $K$ has an open orbit $Z$ 
which is Zariski open in its Zariski closure $\overline{Z}$.  
We set $E := \overline{Z} \setminus Z$ and note that $Z \subset Y$, since $K^{\mathbb C}$ is a subgroup 
of $\widehat{G}$ and $Y \not= \overline{Z}$, since $Y$ is not compact.    
We can equivariantly modify $\overline{Z}$ so that the exceptional set $E$ has 
pure codimension one \cite{Ka67} and is equivariantly desingularized \cite{Hir70}.  
By Lemma 2.2 in \cite{HS82} the compact group $K$ acts transitively on 
the components of $E$.  
In particular, the ``infinity'' component $E_{\infty}$ of $E$ is a flag manifold that is 
an orbit of both $\widehat{G}$ and $K^{\mathbb C}$.   
Note that $\widehat{G} \not= K^{\mathbb C}$, since, otherwise, $G = K$ 
would be a compact real form and $X$ would be compact, 
contrary to our assumption that $d_X = 2$.  

\bigskip 
We first consider the case where $\widehat{G}$ is simple and prove the general case later.  
There are only certain possibilities that can arise when two different complex simple groups act 
transitively on a flag manifold and the classification was given by Onishchik \cite{Oni62}.  
As noted for example in the Main Theorem in \cite{Ste82},   
the isotropy of the $\widehat{G}$--action on $E_{\infty}$ is a {\bf maximal} 
parabolic subgroup of $\widehat{G}$.  
As a consequence there is no proper fibration of $E_{\infty}$.  
We now consider what this tells us about $Y$.  

\bigskip  
If $Y$ is Stein, then $\widehat{H}$ is reductive, see \cite{Mat60} and \cite{Oni60}.  
Then $G$ acts transitively on $Y$ \cite{Oni69} and so $X=Y$.     
Otherwise, the Weisfeiler construction \cite{Weis66} yields a fibration $Y = \widehat{G}/\widehat{H} 
\to \widehat{G}/\widehat{I}$ with $\widehat{I}$ a parabolic group containing $\widehat{H}$ 
and $F := \widehat{I}/\widehat{H}$ Stein.     
By virtue of the fact that there is no proper fibration of $E_{\infty}$ it follows that 
the fiber $F$ has complex dimension one.   
Note that $F$ is non--compact, since $Y$ is.   
Suppose first that $F = \mathbb C$ and so $Y$ is a $\mathbb C$--bundle over the flag manifold 
$Q := \widehat{G}/\widehat{I}$.   
Since $Q$ is compact and simply connected, the Fibration Lemma \cite{AG94} implies $d_Y = 2 = d_X$.     
Otherwise, $F = \mathbb C^*$ and $Y$ is a $\mathbb C^*$--bundle over $Q$.  
But then the $G$--action on $F$ has an open orbit and this is only possible if this orbit is all of $F$. 
Thus $X = Y$  and this observation completes the proof when $\widehat{G}$ is simple.    

\bigskip
We now assume that $\widehat{G}$ is semisimple.  
In general, if $G$ is simple, it is not necessarily the case that $\widehat{G}$ is simple.  
However, we claim that this is so in our setting.    
First note that $G$ is simple if and only if $K$ is simple if and only if $K^{\mathbb C}$ is simple.  
If $\widehat{G}$ is not simple, then $E_{\infty}$ is a product on which $K^{\mathbb C}$ acts transitively, e.g.,  
see p. 1147 in \cite{Wol69}.    
It follows that $K^{\mathbb C}$ is not simple {\it loc. cit.} and, as a consequence, we see that if $G$ is simple, 
then $\widehat{G}$ is simple.  
Now let $G = G_1 \times G_2$ be a decomposition of $G$ with $G_1$ simple and 
let $K = K_1 \times K_2$ be the corresponding decomposition of the maximal compact subgroup $K$ 
with $K_i$ a maximal compact subgroup of $G_i$ for $i=1,2$.   
We fiber the open $\widehat{G}$--orbit $Y$ by $\widehat{G}_2$ to get a fibration 
\[  
     p: \;     Y \; = \; \widehat{G}/\widehat{H} \; \stackrel{F}{\longrightarrow} \; \widehat{G}/\widehat{G}_2\cdot\widehat{H}.   
\]   
Since the generic $K$--orbits in $X$ are real hypersurfaces, there are two possibilities.   
The first case occurs when the generic $K_1$--orbit is a real hypersurface in $p(X)$ and $F$ is compact.  
Then the quotient $p(X)$ also has a divisor at infinity that is $\widehat{G}_1$--invariant, 
$G_1$ has an open orbit, and the generic $K_1$--orbits are hypersurfaces.  
In other words we are in the setting we considered above where    
we showed that either $p(X)=p(Y)$ or $d_{p(Y)}=2$.         
Since the fiber $F$ is compact, it follows that either $X=Y$ or $d_Y = 2$.       
Otherwise, the generic $K_2$--orbit in the $G_2$--orbit is a real hypersurface and $p(X)=p(Y)$ is compact.  
Since the $G_2$--orbit in $F$ is open and has a divisor at infinity that is $\widehat{G}_2$--invariant,    
the result now follows by induction on the number of simple factors constituting the group $\widehat{G}$. 
\end{proof}

\begin{remark} 
Suppose $X$ is an open $G$--orbit in $Y := \widehat{G}/\widehat{H}$, where $\widehat{H}$ is a closed 
complex subgroup of the complex semisimple Lie group $\widehat{G}$.  
The question whether $d_Y \le d_X$ holds in this setting without any additional assumptions 
on $d_X$ and $\widehat{H}$ is, as far as we know, open and there are no known counterexamples.     
In passing we note some cases where this question has an affirmative answer.  

\medskip  
If $\widehat{H}$ is reductive algebraic, then $G$ acts transitively on $Y$ \cite{Oni69}.   
The existence of such an open $G$--orbit in $\widehat{G}/\widehat{H}$ implies the existence 
of a proper complex subalgebra $\mathfrak m$ of the Lie algebra $\widehat{\mathfrak g}$ of $\widehat{G}$ 
such that one has a decomposition $\widehat{\mathfrak g} = \mathfrak g + \mathfrak m$ 
which is called a local factorization of $\widehat{G}$.   
These local factorizations have been studied in detail in certain cases; we refer   
the interested reader to  \cite{Oni69}, \cite{Mal77}, \cite{Akh13}, and the references cited there.       

\medskip   
If $\widehat{H}$ is parabolic, then $X$ is a flag domain in the flag manifold $Y$ which is compact.  
In this case one has $0 = d_Y \le d_X$.  
Malyshev \cite{Mal75} proved that if a real form $G$ of inner type (he calls this first category)    
has an open orbit in $\widehat{G}/\widehat{H}$, then $\widehat{H}$ is parabolic.  
An example where $G$ has an open orbit in $\widehat{G}/\widehat{H}$ with $\widehat{H}$   
not parabolic is given in 
Example 5.5 in \cite{Akh13} with $\widehat{\mathfrak h}$ an ideal in $\widehat{\mathfrak p}$, the Lie algebra of a 
parabolic subgroup $\widehat{P}$ that contains $\widehat{H}$.   
The base cycle of $X$ lies in a coset of a maximal compact subgroup of the group $\widehat{P}/\widehat{H}$ 
and a comparison of their dimension yields $d_Y \le d_X$.  
\end{remark}

\begin{remark} 
a) The assumption that {\bf the $G$--orbit is open} is needed.   
If $G$ is a real form in $\widehat{G}$, a complex semisimple Lie group, then $d_{\widehat{G}} > d_G$.  
\bigskip   \newline  
b) {\bf The semisimple assumption} is also needed.   
In the classification of homogeneous complex surfaces \cite{OR84}  the example   
\[     
   Y = \widehat {G}/\widehat{H}\cong \mathbb {C}^2 \supset X = G/H = \mathbb {C}^2\setminus \mathbb {R}^2   
 \]    
 arises in two ways.      
One way is with solvable groups, see Proposition 5.2 in {\it loc. cit.} 
The other way which is relevant to our present considerations is via the real  
form $G = SL_2(\mathbb R) \ltimes \mathbb R^2$ of the mixed group $\widehat{G} = SL_2(\mathbb C) \ltimes \mathbb C^2$, 
see Theorem 6.3 in {\it loc. cit.}   
Here $d_Y = \dim_{\mathbb R} Y = 4 > 3 = d_X$, since $X$ is topologically $\mathbb S^1\times\mathbb R^3$.      
\end{remark}

\section{Proof of the Theorem}   \label{sect4}

With these preparations we can now give the proof of the Theorem.  

\begin{proof}  
Let $G/H \to G/J$ be the $\mathfrak g$--anticanonical fibration, e.g., see \cite{HO84} and \cite{OR84}.  
If $J=G$, then $G$ has the structure of a complex Lie group and is acting holomorphically on $X$ 
and thus $X$ already has the desired form, e.g., see Corollary 4, p. 64 in \cite{HO84}.       
Note that $H$ is discrete in this case.     

\medskip  
So we assume that $G\neq J$.   
Then $G/J$ and $J/H$ are complex manifolds and  it follows from the Fibration Lemma \cite{AG94} 
that $d_{G/J} \le 2$.   
Further, the bundle $G/H^{\circ} \to G/J$ is a locally trivial {\bf holomorphic principal}  fiber bundle with 
structure group the {\bf complex}  Lie group $J/H^{\circ}$, where $H^{\circ}$ denotes the connected 
component of the identity of the group $H$.      
Moreover, the base $G/J$ of the ${\mathfrak g}$--anticanonical fibration is a $G$--orbit in some $\mathbb P_N$  
and is open in the corresponding $\widehat{G}$--orbit $Y:= \widehat{G}/\widehat{J}$,    
where $\widehat{G}$ denotes the smallest connected complex subgroup of the automorphism 
group ${\rm Aut}(\mathbb P_N)$ containing $G$.      

\medskip   
If we show that the $G$--action on the base $G/J$ can be extended to a $\widehat{G}$--action on $G/J$,
then we can extend the $G$--action on $\tilde{X} := G/H^{\circ}$ to a $\widehat{G}$--action   
on $\tilde{X}$ (Proposition 2.6, \cite{GH98}) and this then defines a $\widehat{G}$--action on $X$.   
Note that if $d_{G/J}=0$, then $G/J = \widehat{G}/\widehat{J}$ is compact and thus    
is a flag manifold and we have the desired extension.      
And if $d_{G/J} = 1$, then the result follows by Theorem 3.1 in \cite{GH98}.     
Hence we assume that $d_{G/J} = 2$ throughout the rest of the proof.     

\medskip  
Now, as a consequence of a Theorem of Chevalley \cite{Che51},    
the commutator subgroup $(\widehat{G})'$ of $\widehat{G}$ has closed orbits in $Y$  
and this yields the following fibrations     
\[  
     \begin{array}{ccc}    
     G/J &  \hookrightarrow  & \widehat{G}/\widehat{J} \\  
      {\downarrow }         &  & {\downarrow }  \\  
         G/I      & \hookrightarrow  & \widehat{G}/\widehat{J}\cdot (\widehat{G})' \end{array}  
\]   
where $I := G \cap /\widehat{J}\cdot (\widehat{G})' $.   
The base $\widehat{G}/\widehat{J}\cdot (\widehat{G})' $ is a Stein Abelian Lie group;   
see the Lemma on p.173 in \cite{HO81}.   
Since $G/I$ is open in the connected Abelian Lie group $ \widehat{G}/\widehat{J}\cdot (\widehat{G})'$, 
it follows that $G/I = \widehat{G}/\widehat{J}\cdot (\widehat{G})' $.   
If $\dim G/I  > 0$, then $d_{G/I} > 0$, since $G/I$ is Stein.     
Also $2 = d_{G/J} \ge d_{I/J} + d_{G/I}$ by the Fibration Lemma in \cite{AG94}.  
If $d_{G/I}=2$, then $I/J$ is compact and $X=Y$, so we are done. 
And if $d_{G/I} = 1$, then $d_{I/J} = 1$.  
Then the complexification of $I$ acts transitively 
on $I/J$ by Theorem 3.1 in \cite{GH98} and this implies $\widehat{G}$ acts transitively on $X$.    
Again we are done.  

\medskip  
So we may assume that $I = G$ and  $(\widehat{G)'}$ is acting transitively and as an algebraic group on $Y$.  
Then the orbits of its radical $R_{(\widehat{G})'}$ are closed and these orbits give rise to the following fibrations    
\[  
     \begin{array}{ccc}    
     G/J &  \hookrightarrow  & \widehat{G}/\widehat{J} \\  
      {\downarrow }         &  & {\downarrow }  \\  
         G/L      & \hookrightarrow  & \widehat{G}/\widehat{J}\cdot R_{(\widehat{G})'} \end{array}  
\]
where $L := G \cap \widehat{J}\cdot R_{(\widehat{G})'} $.    
If $\dim L/J > 0$, then we apply Lemma 5 in \cite{Gil95} where it is shown that $d_{{L/J}}=2$   
and either a complex Lie group acts transitively on $G/J$ or else $G/J$ fibers as a unit disk bundle 
over the compact base $G/L$ thus proving the result in this setting.    

\medskip  
Otherwise, $R_{(\widehat{G})'}$ is acting ineffectively and   
we may assume that $G$ and $\widehat{G}$ are both semisimple.
Now let $M$ be the connected subgroup of $G$ corresponding to the maximal complex ideal 
$\mathfrak m := \mathfrak g\cap i\mathfrak g$ in the Lie algebra $\mathfrak g$.   
The group $M$ is algebraic and has closed orbits in $Y$ and the fibers of the fibrations of $X$ 
and $Y$ induced by these $M$ orbits are equal.    
Hence we may assume that $\mathfrak m = (0)$  and   
thus the problem reduces to the setting where $G$ is a real form     
of a connected complex semisimple Lie group $\widehat{G}$.

\medskip  
Now Lemma \ref{f.lemma} applies and $d_Y \le d_X = 2$.   
So $d_Y = 0, 1,$ or 2.  

\medskip     
Suppose first that $d_{Y} = 0$ and so $Y$ is compact.        
Then $X=G/J$ is a flag domain in the flag manifold $Y= \widehat{G}/\widehat{J}$,   
where $G$ is a real form of a semisimple complex Lie group $\widehat{G}$.   
Then $Y$ splits into a product of flag manifolds of the simple factors of $\widehat{G}$ 
(modulo ineffectivity).   
The flag domain $X$ splits into a corresponding product of flag domains in each flag  
manifold, see (p 1147 in \cite{Wol69}).   
Now we can ignore factors $X_i$ that are compact (the complexification of the simple real 
form acts transitively on such factors) and that satisfy $d_{X_i}=1$ (again the complexification 
acts transitively by Theorem 3.1 in \cite{GH98}).  
Thus we are reduced to consideration of one factor having $d=2$ and all other factors compact.   
Such a flag domain of a real form of a complex semisimple Lie group is handled by Proposition  \ref{rk1ss}.

\medskip  
Next assume that $d_Y=1$.    
By the result in \cite{Akh77}  we get the following fibrations    
\[  
\begin{array}{ccccc}    
     X &  \hookrightarrow  & Y \\  
      {\downarrow^{F}}         &  & {\downarrow^{\mathbb C^*}}  \\  
     Z      & \hookrightarrow  & Q
\end{array}  
\]  
where $Q$ is a flag manifold.
Since $F$ is an open orbit in $\mathbb C^*$, we have $F=\mathbb C^*$.  
Hence $d_Z = 1$, and $\widehat{G}$ acts on $Z$ (Theorem 3.1, \cite{GH98}) and 
thus there is a transitive $\widehat{G}$ action on $X$.  
We are again done by our observations above.   

\medskip 
Finally we consider the case $d_Y=2$.   
We are in a setting where a linear algebraic group is acting on $Y$ and    
Akhiezer \cite{Akh83} proved that the complex manifold $Y$ has the following fibration 
which induces a corresponding fibration of $X$:   
\begin{equation*}
   \begin{array}{rcccc}  
    X               & \hookrightarrow  & Y  \\
     {\downarrow F}  &                  & {\downarrow {\widehat F}} \\ 
     G/U             & \hookrightarrow  &   \widehat{G}/\widehat{P}\\  
   \end{array}
\end{equation*}
where $\widehat{P}$ is a parabolic subgroup of $\widehat{G}$,   
$U := \widehat{P}\cap G$ and $\widehat{F}$ is $\mathbb C, \mathbb C^*\times\mathbb C^*$, or 
$\mathbb P_2$ minus a quadric curve.     
Note that since $F$ is open in the Stein space $\widehat{F}$, one has $d_F \neq 0$. 
If $d_F = 1$, then $F = \mathbb C^*$ and this case does not occur.  
Hence the remaining case is when $d_F = 2$.  
If the manifold $\widehat{F}$ is biholomorphic to 
$\mathbb C^*\times \mathbb C^*$, or $\mathbb P^2\setminus Q$ with $Q$ a quadric curve, 
then the transitive action of $\widehat{P}$ on $\widehat{F}$ 
is given by group multiplication in the first case and the action of $PSL(2,\mathbb C)$ by projective 
transformations in the second.    
In both cases there is no Lie subgroup with an open orbit $F$ having $d_F=2$.   
Otherwise, $\widehat{F} = \mathbb C$ with the complex two dimensional affine group acting transitively, see \cite{Akh83},   
and $F$ can then be the upper half plane as the orbit of the real affine group.  
Once again the unit disk occurs as the fiber of a bundle over a flag manifold and this completes the proof of the Theorem.   
\end{proof}

\end{document}